\documentclass{amsart}

\usepackage[latin1]{inputenc}   
\usepackage[T1]{fontenc}       
\usepackage{amsmath}
\usepackage{amssymb}
\usepackage{amsthm}

\usepackage[dvipsnames]{xcolor}

\usepackage{bm}
\usepackage{mathtools}
\usepackage{mathrsfs}
\usepackage{accents} 
\usepackage{hyperref}
\hypersetup{pdfstartview=}

\newcommand{\beq}{\begin{equation}}
\newcommand{\eeq}{\end{equation}}

\newcommand{\Z}{\ensuremath{\mathbb{Z}}}
\newcommand{\R}{\ensuremath{\mathbb{R}}}

\newcommand{\SL}{\ensuremath{\mathrm{SL}}}

\renewcommand{\epsilon}{\varepsilon}

\newcommand{\ab}{\ensuremath{\mathrm{ab}}}

\newcommand{\diam}{\operatorname{diam}}
\newcommand{\sgn}{\operatorname{sgn}}

\title{Diameters of random Cayley graphs of finite nilpotent groups}
\author{Daniel El-Baz, Carlo Pagano}
\address{Institute of Analysis and Number Theory, TU Graz \\ Steyrergasse 30  \\ 8010 Graz \\ Austria}
\email{danielelbaz88@gmail.com}

\address{Max Planck Institute for Mathematics \\ Vivatsgasse 7 \\ 53111 Bonn \\ Germany}
\email{carlein90@gmail.com}

\begin{document}

\begin{abstract}
We prove the existence of a limiting distribution for the appropriately rescaled diameters of random undirected Cayley graphs of finite nilpotent groups of bounded rank and nilpotency class, thus extending a result of Shapira and Zuck which dealt with the case of abelian groups. The limiting distribution is defined on a space of unimodular lattices, as in the case of random Cayley graphs of abelian groups. Our result, when specialised to a certain family of 
unitriangular groups, establishes a very recent conjecture of Hermon and Thomas. We derive this as a consequence of a general inequality, showing that the diameter of a Cayley graph of a nilpotent group is governed by the diameter of its abelianisation. 
\end{abstract}

\maketitle

\newtheorem{defn}{Definition}[section]
\newtheorem{prop}{Proposition}[section]
\providecommand*{\propautorefname}{Proposition}
\newtheorem{thm}{Theorem}[section]
\providecommand*{\thmautorefname}{Theorem}
\newtheorem{lemma}{Lemma}[section]
\providecommand*{\lemmaautorefname}{Lemma}
\newtheorem{cor}{Corollary}[section]
\providecommand*{\corautorefname}{Corollary}
\newtheorem{rmk}{Remark}[section]
\providecommand*{\rmkautorefname}{Remark}

\section{Introduction} \label{intro}
Metric properties of graphs are important in the study of networks. 
A key example is given by the diameter of a graph, which is defined to be the longest distance among the pairs of vertices of the graph.

A natural family of graphs is provided by Cayley graphs of groups.
For certain finite simple groups and generating sets, upper bounds on the diameter show logarithmic growth. That is sharp since one always has a logarithmic lower bound, which essentially comes from the fact that finitely generated groups always have at most exponential growth. The motivation for proving such upper bounds is Babai's conjecture \cite[Conjecture 1.7]{BS1992}, which postulates the existence of a constant $a>0$ such that for every finite simple group $G$ and every generating set $S$, one has $\diam(\Gamma(G,S)) \le (\log{|G|})^{a}$.

In contrast, Amir and Gurel-Gurevich \cite{AGG2010} started investigating the diameter of cyclic groups $\Z/q\Z$ with respect to a random set $S$ of generators of fixed size, say $k \ge 2$. The fact that finitely generated abelian group of rank $k$ have growth of order a polynomial of degree $k$ in the radius is reflected in a simple lower bound of the order of $q^{1/k}$; furthermore, they obtain that for any function $f$ going to infinity with $q$ the probability that the diameter of the random Cayley graph is bigger than $f(q) q^{1/k}$ goes to $0$ as $q \to \infty$. That led them to conjecture that, as $q \to \infty$, the random variables given by the diameter of the corresponding random Cayley graph, when rescaled by $q^{1/k}$, converge in distribution. 

Marklof and Str\"ombergsson \cite{MarklofStrombergsson2013} introduced a strategy relating that problem to an equidistribution theorem in homogeneous dynamics and were able to prove a version of that conjecture in which the cyclic group itself was also taken at random (with $q \in \Z \cap [1, Q]$).
Inspired by that approach, Shapira and Zuck  \cite{ShapiraZuck2018} settled that conjecture and further extended it from finite cyclic groups to arbitrary finite abelian groups of bounded rank.
In the present article, we obtain the analogous result for finite nilpotent groups with bounded rank and nilpotency class.

Recall that for a group $G$ one can inductively define the filtration of subgroups $\mathbb{Z}_{\ge 1} \ni i \mapsto G^{(i)}$--- called the lower central series --- given by
$G^{(1)}=G$
and for $i \ge 1$,
\begin{equation*}
G^{(i+1)}=[G,G^{(i)}].
\end{equation*}
A group is said to be nilpotent if there exists $i \ge 1$ such that $G^{(i)}=\{\mathrm{id}\}$. In that case, the nilpotency class of $G$ is defined to be the smallest positive integer $c$ such that $G^{(c+1)}=\{\mathrm{id}\}$.
For a group $G$ and a symmetric generating set $S \subset G$, we denote by $\Gamma(G, S)$ the Cayley graph of $G$ with respect to $S$.

\begin{thm} \label{thm:main_intro}
Let $k > r \geq 1$ and $c \geq 1$ be integers. Let $\{G_n\}_{n \in \mathbb{Z}_{\geq 1}}$ be a sequence of finite nilpotent groups of rank at most $r$, nilpotency class at most $c$ and with $\lim_{n \to \infty} |G_n| = \infty$.
Choosing a subset $S$ uniformly at random among all symmetric generating subsets of $S$ of $G_n$ of size $k$, then as $n \to \infty$ the random variables $\dfrac{\diam(\Gamma(G_n, S))}{|G_n^{\ab}|^{\frac{1}{k}}}$ converge in distribution.
\end{thm}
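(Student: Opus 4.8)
The plan is to reduce everything to the abelian case --- the theorem of Shapira and Zuck --- by showing that replacing $G_n$ by its abelianisation perturbs the rescaled diameter only by a quantity that tends to $0$ in probability. Write $\bar{S}$ for the image of $S$ in $G_n^{\ab}$ and $D_n := \diam(\Gamma(G_n^{\ab}, \bar{S}))$. The heart of the argument is a deterministic inequality of the shape
\[ D_n \;\le\; \diam(\Gamma(G_n, S)) \;\le\; D_n + \mathrm{Err}(G_n, S), \]
where $\mathrm{Err}(G_n, S)$ is an explicit ``commutator correction'' which I claim satisfies $\mathrm{Err}(G_n, S) = o(|G_n^{\ab}|^{1/k})$ with probability tending to $1$. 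Two preliminary remarks let one combine this with Shapira--Zuck. First, since $G_n$ is finite nilpotent, $[G_n,G_n] \le \Phi(G_n)$, so a subset of $G_n$ generates $G_n$ if and only if its image generates $G_n^{\ab}$; hence, after a standard reduction to generating $k$-tuples (for which $G_n^{k} \to (G_n^{\ab})^{k}$ is measure preserving), the law of $\bar{S}$ is the uniform law on generating $k$-tuples of $G_n^{\ab}$, which is the input of Shapira--Zuck. Second, $G_n^{\ab}$ has the same rank as $G_n$, hence rank $\le r$, and $|G_n^{\ab}| \to \infty$ (were it bounded, bounded rank and class would force $|G_n|$ bounded). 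Thus Shapira--Zuck gives that $D_n / |G_n^{\ab}|^{1/k}$ converges in distribution, and the sandwich transports this to $\diam(\Gamma(G_n,S))/|G_n^{\ab}|^{1/k}$, with the same limiting law.

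For the sandwich, the lower bound is just the observation that the quotient $G_n \to G_n^{\ab}$ carries $S$ to $\bar{S}$ and is $1$-Lipschitz for the word metrics. For the upper bound, fix $g \in G_n$ and build a word for it in $S$ layer by layer along the lower central series: a geodesic word for the image of $g$ in $G_n^{\ab}$ has length $\le D_n$ and lifts to a word $w_1$ in $S$ of that length with $g w_1^{-1} \in G_n^{(2)}$; then, inductively in $i = 2, \dots, c$, if the residual element lies in $G_n^{(i)}$ one uses that $G_n^{(i)}/G_n^{(i+1)}$ is abelian and generated by the images of the weight-$i$ basic commutators in the elements of $S$ to write the residue, modulo $G_n^{(i+1)}$, as a product of powers of those basic commutators with total exponent sum at most
\[ \Delta_i(S) := \diam_{\ell^1}\bigl(G_n^{(i)}/G_n^{(i+1)},\ \{\text{weight-}i\text{ basic commutators in }S\}\bigr). \]
Using the identity $[x^N,y] = [x,y]^N \cdot (\text{commutators of higher weight})$ in a class-$\le c$ group, together with a downward induction on weight, a power $\gamma^N$ of a weight-$i$ basic commutator is realised by a word in $S$ of length $\le C_c(|N|+1)$ --- the higher-weight corrections being swept into later layers --- so the $i$-th layer costs $\le C_{c,k}(\Delta_i(S)+1)$. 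Altogether $\mathrm{Err}(G_n,S) \le C_{r,c,k}\bigl(1 + \sum_{i=2}^{c}\Delta_i(S)\bigr)$. (Allowing commutators of arbitrary words in $S$, not only of generators, streamlines the bookkeeping but is not essential.)

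It then remains to show $\Delta_i(S) = o(|G_n^{\ab}|^{1/k})$ with high probability, and this is exactly where the hypothesis $k > r$ is used. On one side, a Lie-theoretic size bound gives $|G_n^{(i)}/G_n^{(i+1)}| \le |G_n^{\ab}|^{e_i}$ with $e_i = e_i(r,c) \le N_i(r)/r$, where $N_i(m) = \tfrac1i\sum_{d\mid i}\mu(d)m^{i/d}$ is the number of weight-$i$ basic commutators on $m$ letters: indeed the Lie bracket induces a surjection of the degree-$i$ part of the free Lie ring on $G_n^{\ab}$ onto $G_n^{(i)}/G_n^{(i+1)}$, and combining the elementary-divisor structure of $G_n^{\ab}$ (at most $r$ invariant factors $d_1 \mid \dots \mid d_\ell$) with Chebyshev's sum inequality yields, e.g., $e_2 \le (r-1)/2$. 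On the other side, there are $N_i(k)$ weight-$i$ basic commutators in $S$, and for a uniformly random $S$ these are ``spread out'' enough in $G_n^{(i)}/G_n^{(i+1)}$ that $\Delta_i(S) \le |G_n^{(i)}/G_n^{(i+1)}|^{1/N_i(k)+o(1)}$ with probability $\to 1$. Combining the two, $\Delta_i(S) \le |G_n^{\ab}|^{\,e_i/N_i(k)+o(1)}$, and the elementary inequality $N_i(r)/r < N_i(k)/k$ --- valid precisely because $r < k$, since $m \mapsto N_i(m)/m$ is increasing for $i \ge 2$ --- gives $e_i/N_i(k) < 1/k$, making $\Delta_i(S) = o(|G_n^{\ab}|^{1/k})$ and finishing the proof.

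I expect the main obstacle to be the probabilistic estimate on $\Delta_i(S)$: the weight-$i$ basic commutators are iterated-bracket (hence polynomial) functions of the random generators, so they are neither independent nor uniform, and the standard bound ``$m$ uniform elements of a bounded-rank abelian group have $\ell^1$-diameter $\ll |A|^{1/m}$'' cannot be quoted verbatim. I would obtain what is needed by a conditioning argument: freezing all generators but one, a basic commutator involving the free generator is a nondegenerate linear function of it, hence has conditional law uniform on a subgroup; running over all $k$ generators shows the joint law of the basic commutators has density bounded below by a positive constant (depending only on $r,c$) with respect to the uniform law on a subgroup of $G_n^{(i)}/G_n^{(i+1)}$ of index $O_{r,c}(1)$, after which the abelian covering estimates underlying Shapira--Zuck apply. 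A more geometric alternative bounds $\Delta_i(S)$ directly against $D_n$ using that, for random $\bar{S}$, the word-metric ball of radius $\rho$ in $G_n^{\ab}$ behaves like a generic box of volume about $\rho^{k}$ --- the ``random lattice'' picture that produces the limiting distribution --- so that the iterated-bracket image of a product of such balls covers $G_n^{(i)}/G_n^{(i+1)}$ as soon as $\rho$ slightly exceeds $|G_n^{(i)}/G_n^{(i+1)}|^{1/(iN_i(k))}$; this too hinges on $k > r$.
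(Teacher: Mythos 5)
Your overall architecture (sandwich the diameter between $D_n$ and $D_n+\mathrm{Err}$, reduce to Shapira--Zuck on $G_n^{\ab}$, kill the error by Slutsky) matches the paper's, and your reductions of the random model and the observation that $|G_n^{\ab}|\to\infty$ are fine. But there is a genuine gap in how you bound $\mathrm{Err}$, and it comes from missing the one idea the paper's proof actually turns on: \emph{distortion}. You realise the $N$-th power of a weight-$i$ commutator by a word of length $O(N)$ in $S$, which forces you to prove that the basic commutators are ``spread out'' in $G_n^{(i)}/G_n^{(i+1)}$ so that $\Delta_i(S)=o(|G_n^{\ab}|^{1/k})$ with high probability --- a nontrivial probabilistic statement that you correctly identify as the main obstacle but do not prove. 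The paper instead writes $N=a_1^i+\dots+a_{n_i}^i+r$ with $r=O_i(N^{1/i})$ and uses multilinearity, $[g_1^{a},[g_2^{a},\dots,[g_{i-1}^{a},x^{a}]\dots]]\equiv [g_1,[\dots,x]\dots]^{a^i}$ modulo $G^{(i+1)}$, to realise the $N$-th power by a word of length $O_i(N^{1/i})$. Together with Jensen's inequality this gives the \emph{deterministic} bound $\diam(G^{(i)}/G^{(i+1)},S)=O_{i,k}(D_n^{1/i})$, hence $\mathrm{Err}=O_{c,k}(\sqrt{D_n})$ for every generating set, and the error term then vanishes after rescaling with no probabilistic input about commutators at all.

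Beyond being harder than necessary, your proposed repair of the probabilistic step is not correct as stated. The joint law of the weight-$i$ basic commutators is in general \emph{not} bounded below by a constant times the uniform law on a bounded-index subgroup of $(G_n^{(i)}/G_n^{(i+1)})^{N_i(k)}$: already for $i=2$ and $G_n$ the Heisenberg group over $\Z/q\Z$, the $\binom{k}{2}$ commutators $[x_j,x_l]$ are the $2\times 2$ minors of a random $2\times k$ matrix and satisfy Pl\"ucker-type relations, so their joint law is concentrated on a set of density tending to $0$. A conditioning argument fixing one generator can salvage a bound of the shape $\Delta_i(S)\ll q^{1/(k-1)+\epsilon}$ in that example, but making this work uniformly over all bounded-rank, bounded-class nilpotent groups (and all $i\le c$) is a substantial piece of work that the proposal does not carry out. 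As written, the proof is incomplete at its central step; the distortion argument is the missing ingredient that closes it.
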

A version of this theorem which also contains a fairly explicit description of the limiting distribution is given as \autoref{thm:main1}.

In recent work, Hermon and Thomas \cite{HermonThomas2019} investigated random walks on certain finite unitriangular groups, defined for $q, d \in \Z_{\ge 2}$, to be the group of $d \times d$ matrices over $\Z/q\Z$ which are upper triangular and whose diagonal entries are all $1$; that group is denoted by $H_{q,d}$.
Those are special examples of finite nilpotent groups.

Hermon and Thomas establish a concentration for the {\em typical distance} --- a function of a parameter $\beta \in (0,1)$ defined as the smallest radius of a ball centred at the identity which is enough to cover a proportion $\beta$ of the group --- of the random Cayley graphs of those unitriangular groups, which they show concentrates around the value it takes for the abelianisation of $H_{q,d}$ when the number of generators diverges (or is at least large enough as a function of $d$); combined with the simple lower bound on the diameter coming from the growth of the group, which is of the same order, that led them to conjecture the existence of a limiting distribution for the diameters of those graphs with that particular rescaling.

As a consequence of \autoref{thm:main_intro}, we establish their conjecture. \footnote{Their paper only deals with $q$ prime and directed graphs; ours treats arbitrary $q \ge 2$ but undirected graphs.}

\begin{thm}
Let $q\ge 2, d \ge 2$ and $k \ge d$.
Let $Z_1(q), \ldots, Z_k(q)$ be generators of $H_{q,d}$ chosen uniformly and independently, write $\Delta_{Z(q)}(k)$ for the diameter of the random Cayley graph with generators $Z(q) = (Z_1(q)^{\pm 1}, \ldots, Z_k(q))^{\pm 1})$.
As $q \to \infty$, the random variables $\dfrac {\Delta_{Z(q)}(k)}{q^{(d-1)/k}}$ converge in distribution.
\end{thm}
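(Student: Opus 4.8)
The plan is to deduce the statement from \autoref{thm:main_intro} (or its sharper form \autoref{thm:main1}), by checking that for each fixed $d\ge 2$ the family $\{H_{q,d}\}_{q\ge 2}$ lies within its scope and by computing the invariants that enter the normalisation. Recall that $H_{q,d}$ is the group of $d\times d$ upper unitriangular matrices over $\Z/q\Z$; write $E_{ij}(t)=I+te_{ij}$ for $i<j$. By the usual ring-independent computation of the lower central series, $H_{q,d}^{(i)}$ is the subgroup of matrices $I+N$ with $N_{jk}=0$ whenever $k-j<i$; hence $H_{q,d}^{(d)}=\{I\}$ while $H_{q,d}^{(d-1)}=\{E_{1d}(t):t\in\Z/q\Z\}\neq\{I\}$, so $H_{q,d}$ is nilpotent of class exactly $d-1$, uniformly in $q$. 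Moreover $[E_{i,i+1}(1),E_{i+1,i+2}(1)]=E_{i,i+2}(1)$, and inductively on $j-i$ every $E_{ij}(t)$ lies in the subgroup generated by $\{E_{i,i+1}(1):1\le i\le d-1\}$; these $d-1$ elements therefore generate $H_{q,d}$, so its rank is at most $d-1$, again uniformly in $q$. Since $k\ge d$ we have $1\le d-1<k$, so \autoref{thm:main_intro} applies with $r=d-1$ and $c=d-1$, and $\lim_{q\to\infty}|H_{q,d}|=\infty$ because $|H_{q,d}|=q^{\binom d2}$ with $\binom d2\ge 1$.

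Next I would identify the normalisation. The assignment $I+N\mapsto(N_{12},N_{23},\dots,N_{d-1,d})$ is a surjective homomorphism $H_{q,d}\to(\Z/q\Z)^{d-1}$ (a one-line matrix computation), and its kernel is exactly $[H_{q,d},H_{q,d}]=H_{q,d}^{(2)}$: one inclusion is the homomorphism property, and for the reverse one notes that the kernel is generated by the $E_{ij}(t)$ with $j\ge i+2$, each of which is a commutator by the identity above. Hence $H_{q,d}^{\ab}\cong(\Z/q\Z)^{d-1}$ and $|H_{q,d}^{\ab}|^{1/k}=q^{(d-1)/k}$, which is exactly the rescaling in the statement. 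It then remains to reconcile the random models: \autoref{thm:main_intro} chooses $k$ generators uniformly and forms the associated undirected (symmetric) Cayley graph, whereas here $Z_1(q),\dots,Z_k(q)$ are i.i.d.\ uniform and are symmetrised into $Z(q)$. Because $|H_{q,d}|\to\infty$, with probability $1-O(|H_{q,d}|^{-1})$ the $Z_i(q)$ are pairwise distinct and distinct from one another's inverses; conditioning on this turns the induced unordered $k$-set into a uniform one, the ordering being irrelevant to the Cayley graph, and both models then condition on the event that the chosen elements generate $H_{q,d}$, whose probability stays bounded away from $0$ because $k>r$. Invoking \autoref{thm:main_intro} now yields the asserted convergence in distribution.

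I do not expect a genuine obstacle: all the substance is in \autoref{thm:main_intro}, and what remains is to verify its hypotheses and to match normalisations. The one point needing a little care is the passage between drawing a uniform generating $k$-set and drawing $k$ i.i.d.\ uniform elements (both symmetrised), which is handled by the elementary large-group coincidence estimates above; the class and rank bounds, being independent of $q$, raise no uniformity issue.
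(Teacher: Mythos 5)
Your proposal is correct and follows essentially the same route as the paper: the authors likewise derive this theorem by applying \autoref{thm:main1} to the sequence $\{H_{q,d}\}_q$, noting that these groups have nilpotency class $d-1$ and $H_{q,d}^{\ab}\simeq(\Z/q\Z)^{d-1}$, so the rescaling $|H_{q,d}^{\ab}|^{1/k}=q^{(d-1)/k}$ matches. You simply spell out more of the verifications (lower central series, rank bound, and the reconciliation of the i.i.d.\ sampling model with the uniform-generating-set model) that the paper leaves implicit.
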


We state a more precise version of the above as \autoref{thm:main} which also includes an explicit description of the limiting distribution in terms of the space of $k$-dimensional unimodular lattices. The latter is the same as the limiting distribution for the random undirected Cayley graph of the finite abelian group $(\Z/q\Z)^{d-1}$ with $k$ generators chosen uniformly at random which was obtained by Shapira and Zuck.
Note that the distribution is also the same as that from the paper by Marklof and Str\"ombergsson (for random undirected circulant graphs with respect to $k$ generators), in which they make use of the description in terms of random unimodular lattices in $\R^k$ to derive quantitative properties of the limiting distribution such as tail estimates.

Indeed, our strategy for proving those theorems consists in establishing a general inequality for the diameter of a Cayley graph on a finite nilpotent group, which essentially shows that this diameter is governed by the diameter of the abelianisation. This is done in \autoref{sec:finnil}. The crucial step is \autoref{prop:main}. In that proposition we take advantage of the well-known phenomenon of \emph{distortion} in nilpotent groups, that is the possibility of rewriting $N$ times a nested commutator of length $i$ in time $O(N^{\frac{1}{i}})$ modulo nested commutators of length at least $i+1$, for a positive integer $N$. The upper bound in \autoref{prop:main} is reminiscent of the formula of Bass and Guivarc'h for the growth in finitely generated nilpotent group \cite[Appendix]{Gromov1981}, which indeed relies on the same phenomenon of distortion. We remark that a very similar argument can be found in the proof of \cite[Lemma 4.11]{BreuillardTointon2016}. \footnote{We thank Matthew Tointon for pointing out this reference to us after receiving a first draft of this paper.} This upper bound leads us to wonder whether $q^{\frac{d-1}{ik}}$ is the correct scale for the diameters (with respect to the ambient metric on the group) of the $i$-th term of the lower central series of undirected Cayley graphs of $H_{q,d}$ with respect to a random generating set. In the concluding section we ask this and a few related questions.

{\bf Acknowledgements:} We thank Jonathan Hermon for a helpful email on his work with Sam Thomas \cite{HermonThomas2019}. We also thank him for pointing out that in a previous version of this paper there was an inaccuracy in the rounding off with $i$-th powers at the end of the proof of Proposition \ref{prop:main}. Many thanks to Matthew Tointon for encouraging us to state our main result in the generality of Theorem \ref{thm:main1}, for providing us with references to his work and for helpful feedback that led us to improve the presentation. We are grateful to Uri Shapira, Andreas Str{\"o}mbergsson and Mima Stanojkovski for feedback on a previous version of this work that led to an improvement of the presentation. The authors wish to thank the Max Planck Institute for Mathematics in Bonn for its financial support, great working conditions and an inspiring atmosphere. Daniel El-Baz is supported by the Austrian Science Fund (FWF), project Y-901. 

\section{Diameters of finite nilpotent groups} \label{sec:finnil}

\subsection{Diameters of a group and its quotients}
For a finite group $G$ with symmetric generating set $S$, a normal subgroup $H$ of $G$ and a normal subgroup $N$ of $H$, we view $H$ and $N$ as metric subspaces of the Cayley graphs $\Gamma(G, S)$, which allows us to define the diameters of $N$ and $H$ with respect to $S$, which we denote respectively by $\diam(H, S)$ and $\diam(N, S)$.

This metric also induces one on the quotient $\frac{H}{N}$ and allows us to define the diameter of that group with respect to the projections of the elements of $S$ onto $\frac{H}{N}$, which we denote by $\diam \left(\frac{H}{N}, S \right)$.

When $H=G$, that last quotient coincides as a metric space with the Cayley graph of $\frac{G}{N}$ with respect to the projections of the elements of $S$ in $\frac{G}{N}$.

The following lemma relates those three quantities.

\begin{lemma} \label{lem:quotients} For every finite group $G$ with symmetric generating set $S$, every normal subgroup $H$ of $G$ and every normal subgroup $N$ of $H$, we have 
\begin{equation} 
\diam \left(\frac{H}{N}, S \right) \le \diam(H, S) \le \diam \left(\frac{H}{N},S \right) + \diam(N,S).
\end{equation}
\end{lemma}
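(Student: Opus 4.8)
The plan is to prove the two inequalities separately, both by direct arguments about word lengths in the generating set $S$.

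For the left-hand inequality $\diam(H/N, S) \le \diam(H, S)$, I would observe that the projection map $\pi \colon H \to H/N$ is $1$-Lipschitz with respect to the metrics induced by $S$: if $h \in H$ can be written as a product of $\ell$ elements of $S$ (within the Cayley graph $\Gamma(G,S)$, but using only elements of $H$, which one may do since $H$ is a metric subspace and any geodesic between two points of $H$ in $\Gamma(G,S)$ need not stay in $H$ — so more carefully, one uses the intrinsic word metric on $H$ coming from $S \cap$ its induced graph structure), then $\pi(h)$ is a product of the $\ell$ projected generators. Hence $d_{H/N}(\pi(h_1), \pi(h_2)) \le d_H(h_1, h_2)$ for all $h_1, h_2 \in H$, and since $\pi$ is surjective onto $H/N$, taking the supremum over pairs gives $\diam(H/N, S) \le \diam(H, S)$.

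For the right-hand inequality, I would take an arbitrary element $h \in H$ and show it has word length (in $H$ with respect to $S$) at most $\diam(H/N, S) + \diam(N, S)$. The idea is a two-step journey: first travel from $\mathrm{id}$ to a representative of the coset $hN$ that is close to $\mathrm{id}$ in $H/N$ — this costs at most $\diam(H/N, S)$ steps and lands at some element $h' \in H$ with $h'N = hN$; then travel from $h'$ to $h$, which amounts to travelling from $\mathrm{id}$ to $h'^{-1}h \in N$ inside $N$, costing at most $\diam(N, S)$ steps. Concatenating, $h$ is reachable from $\mathrm{id}$ in at most $\diam(H/N, S) + \diam(N, S)$ steps; since $h$ was arbitrary and the diameter of $H$ equals the maximal such distance from $\mathrm{id}$ (using left-invariance of the metric), we conclude $\diam(H, S) \le \diam(H/N, S) + \diam(N, S)$.

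The main subtlety — and the step I would be most careful about — is the interaction between the three metrics, all of which are defined as restrictions of the ambient metric on $\Gamma(G,S)$ rather than as intrinsic word metrics on $H$, $N$, and $H/N$. One must check that a path realizing a distance in $H/N$ can be lifted to a path in $H$ of the same length (possible because each projected generator pulls back to a generator in $S$, and $H$ is normal so such products stay in $H$), and that a distance in $N$ is genuinely realized by a path through $N$ — here the fact that $N$ is viewed as a metric subspace, together with translation-invariance, is what makes the concatenation legitimate. Once these compatibility points are nailed down, both inequalities follow immediately from the triangle inequality and surjectivity of the projection.
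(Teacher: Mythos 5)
Your proposal is correct and follows essentially the same route as the paper: the lower bound comes from the fact that projecting a path realizing a distance in $H$ gives a path of the same length in $H/N$, and the upper bound is the same two-step concatenation (first reach the correct coset of $N$ using at most $\diam(H/N,S)$ generators, then correct the remaining element of $N$ using at most $\diam(N,S)$ generators), with the paper phrasing it for an arbitrary pair $h_1,h_2$ where you reduce to distances from the identity via translation invariance. The compatibility points you flag are handled correctly, so no further changes are needed.
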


\begin{proof}
The lower bound on $\diam(H, S)$ follows from the definition.

We now prove the upper bound.
Fix two elements $h_1$ and $h_2$ in $H$.
Define $d_1 = \diam \left(\frac{H}{N}, S \right)$ and $d_2 = \diam(N, S)$.
By definition of $d_1$, we find $x_1, \ldots, x_{s_1} \in S$ with $s_1 \le d_1$ such that there exists $n \in N$ such that \begin{equation}\label{diamGmodN}
x_1 \cdots x_{s_1} h_1 = n h_2.
\end{equation}
For that $n \in N$ and by definition of $d_2$, we find $y_1, \ldots, y_{s_2} \in S$ with $s_2 \le d_2$ connecting $\mathrm{id}$ to $n$, that is 
\begin{equation}\label{diamN}
y_1 \cdots y_{s_2} = n.    
\end{equation}
Combining \eqref{diamGmodN} and \eqref{diamN}, we get
\begin{equation}
    y_{s_2}^{-1} \cdots y_1^{-1} x_1 \cdots x_{s_1} h_1 = h_2,
\end{equation}
which means that the distance between $h_1$ and $h_2$ via elements of $S$ is at most $s_1 + s_2$ which is itself at most $d_1 + d_2$, hence the claim.
\end{proof}

\subsection{Multilinear maps attached to groups} \label{tensors}
In this section we briefly recall (part of) the multilinear structure present on a group $G$.

For $x,y$ in $G$ we denote $[x,y]=xyx^{-1}y^{-1}$. Observe that $[x,y]^{-1}=[y,x]$. Furthermore if $z$ is also in $G$, then we have $[x,zy]=[x,z][x,y][z,[y,x]]^{-1}$. Observe that if $z,y$ are taken to be in $G^{(i)}$ for some $i \in \mathbb{Z}_{\geq 1}$, this last identity tells us that the commutator pairing
$$G \times \frac{G^{(i)}}{G^{(i+1)}} \to \frac{G^{(i+1)}}{G^{(i+2)}}
$$
is bilinear in the second entry (observe that by definition $\frac{G^{(j)}}{G^{(j+1)}}$ is an abelian group for any positive integer $j$). A similar computation shows that this pairing factors through $G^{(2)}$ in the first coordinate and is bilinear in both entries for the map
$$\frac{G^{(1)}}{G^{(2)}}  \times \frac{G^{(i)}}{G^{(i+1)}} \to \frac{G^{(i+1)}}{G^{(i+2)}}.
$$

Hence in total we get a homomorphism $(G^{\mathrm{ab}})^{\otimes i} \twoheadrightarrow \frac{G^{(i)}}{G^{(i+1)}}$, defined by the multilinear map from $(G^{\ab})^{i} \to \frac {G^{(i)}}{G^{(i+1)}}$ sending the vector $(g_1, \ldots, g_i)$ to the class of $[g_1, [g_2, \ldots, [g_{i-1}, g_i], \ldots]$ modulo $G^{(i+1)}$. In particular notice that if $S$ generates $G$, then nested commutators among elements of $S$ of length $i$ generate $\frac{G^{(i)}}{G^{(i+1)}}$.

\subsection{Comparing diameters}
We shall need the following elementary lemma. 
\begin{lemma} \label{elementary lemma}
Let $i$ be a positive integer. Then there exist positive integers $C_i,n_i$ such that for any $\lambda \in \mathbb{Z}_{\geq 1}$ one can find $a_1, \dots, a_{n_i},r$ in $\mathbb{Z}_{\geq 0}$ such that 
\[ 
\lambda=a_1^i+ \dots +a_{n_i}^i+r,
\]
with $r \leq C_i \lambda^{1/i}$. 
\begin{proof}
Observe that one can find a constant $D_i$ such that for each positive integer $\lambda$ one has a representation $\lambda=a_1^i+r_1$, with $r_1 \leq D_i\lambda^{\frac{i-1}{i}}$: to this end take $a_1:=\lfloor \lambda^{1/i} \rfloor$ and apply the binomial expansion to the worst case scenario $\lambda=(\lfloor \lambda^{1/i}\rfloor+1)^i-1$. Hence, iterating this, we obtain that for each $j$ in $\mathbb{Z}_{\geq 1}$ there are non-negative integers $a_1, \dots, a_j, r_j$ such that
\[ 
\lambda=a_1^i+ \dots +a_j^{i}+r_j,
\]
with $r_j \leq D_i^{\sum_{h=0}^{j-1}(\frac{i-1}{i})^h}\lambda^{(\frac{i-1}{i})^j}$. Choosing $j$ such that $\left(\frac{i-1}{i} \right)^j<\frac{1}{i}$ yields the desired conclusion. 
\end{proof}
\end{lemma}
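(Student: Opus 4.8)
The plan is to run a greedy iterated $i$-th-power extraction. The first step is a one-move estimate: for any $\lambda \in \mathbb{Z}_{\ge 1}$, put $a := \lfloor \lambda^{1/i}\rfloor$, so that $a \ge 1$ and $a^i \le \lambda < (a+1)^i$; then the remainder $r_1 := \lambda - a^i$ satisfies
\[
r_1 < (a+1)^i - a^i = \sum_{j=0}^{i-1}\binom{i}{j}a^j \le 2^i a^{i-1} \le 2^i \lambda^{\frac{i-1}{i}},
\]
so one may take $D_i := 2^i$ in the bound $r_1 \le D_i\,\lambda^{\frac{i-1}{i}}$ (the exponent $\frac{i-1}{i}$ is best possible in one move, as the case $\lambda=(a+1)^i-1$ shows).

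Next I would iterate this. Apply the one-move estimate to $\lambda$ to obtain $a_1$ and $\lambda_1 := \lambda - a_1^i$; if $\lambda_1 \ge 1$, apply it again to $\lambda_1$ to get $a_2$ and $\lambda_2$, and so on, stopping (and padding the remaining $a_\ell$ with zeros) once a remainder equals $0$. After $j$ steps we have $\lambda = a_1^i + \cdots + a_j^i + \lambda_j$ with each $a_\ell \in \mathbb{Z}_{\ge 0}$, and unrolling the recursion $\lambda_h \le D_i\,\lambda_{h-1}^{\frac{i-1}{i}}$ gives
\[
\lambda_j \;\le\; D_i^{\;\sum_{h=0}^{j-1}\left(\frac{i-1}{i}\right)^h}\,\lambda^{\left(\frac{i-1}{i}\right)^j}.
\]
Because $\frac{i-1}{i} < 1$, the exponent of $D_i$ is at most $\sum_{h\ge 0}\left(\frac{i-1}{i}\right)^h = i$, so the constant factor is at most $D_i^{\,i}$, a quantity depending only on $i$.

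Finally I would fix $j = j(i)$ large enough that $\left(\frac{i-1}{i}\right)^j < \frac 1i$, which is possible since $\frac{i-1}{i} < 1$ (for $i=1$ already $j=1$ works and the whole statement is trivial there). Then $\lambda \ge 1$ forces $\lambda^{\left(\frac{i-1}{i}\right)^j} \le \lambda^{1/i}$, whence $\lambda_j \le D_i^{\,i}\,\lambda^{1/i}$. Taking $n_i := j(i)$, $C_i := D_i^{\,i}$, and $r := \lambda_j$, the decomposition produced above is exactly the one claimed.

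I do not anticipate a real obstacle: the lemma is elementary, and the only thing to watch is the bookkeeping of constants across iterations --- specifically, that the geometric series $\sum_h \left(\frac{i-1}{i}\right)^h$ converges (so that the accumulated constant $D_i^{\sum}$ stays bounded by $D_i^{\,i}$) and that the number of iterations $n_i$ depends only on $i$, never on $\lambda$. A heavier alternative would be to quote Waring's theorem to make $r = 0$ outright for all large $\lambda$, but that is unnecessary overkill, since the greedy remainder is already of the required size $O_i(\lambda^{1/i})$.
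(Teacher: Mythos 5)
Your proposal is correct and follows the paper's proof essentially verbatim: greedily extract $a_1=\lfloor\lambda^{1/i}\rfloor$, bound the one-step remainder by $D_i\lambda^{(i-1)/i}$ via the binomial expansion, iterate, and stop once $\left(\frac{i-1}{i}\right)^j<\frac{1}{i}$. The only addition is your explicit bookkeeping of the constants ($D_i=2^i$, $C_i=D_i^{\,i}$ via the geometric series), which the paper leaves implicit.
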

\begin{prop} \label{prop:main}
For every finite group $G$, every symmetric generating set $S \subset G$ of size $s \in \Z_{\ge 2}$ and every $i \ge 1$, we have
\begin{equation}
    \diam\left(\frac{G^{(i)}}{G^{(i+1)}}, S \right) = O_{i, s} \left(\diam(\Gamma(G^{\ab}, S))^{1/i} \right).
\end{equation}
\end{prop}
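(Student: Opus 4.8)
The plan is to exploit the surjective multilinear map $(G^{\ab})^{\otimes i} \twoheadrightarrow \frac{G^{(i)}}{G^{(i+1)}}$ recalled in \autoref{tensors}, together with the distortion phenomenon: a word of length $\lambda$ in the abelianisation can be realised, at the level of an $i$-fold nested commutator, by a word of length $O_{i,s}(\lambda^{1/i})$ in $S$. Concretely, let $D = \diam(\Gamma(G^{\ab}, S))$ and write $S = \{t_1, \ldots, t_s\}$. First I would fix a generating set of $\frac{G^{(i)}}{G^{(i+1)}}$ consisting of the classes of the nested commutators $[t_{j_1}, [t_{j_2}, \ldots, [t_{j_{i-1}}, t_{j_i}] \ldots]]$ as $(j_1, \ldots, j_i)$ ranges over $\{1,\ldots,s\}^i$; there are $s^i$ of them, a bound depending only on $i$ and $s$. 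Every element of $\frac{G^{(i)}}{G^{(i+1)}}$ is a product of powers of these generators, so it suffices to bound the $S$-length of a large power of a single such generator, and then of a bounded-length product of such powers.

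The key estimate is: for $\lambda \in \Z_{\ge 1}$, the element $[t_{j_1}, [t_{j_2}, \ldots, [t_{j_{i-1}}, t_{j_i}] \ldots]]^{\lambda}$ lies at distance $O_{i,s}(\lambda^{1/i})$ from the identity in $\Gamma(G,S)$, modulo $G^{(i+1)}$. To see this, I would argue by induction on $i$. The base case $i=1$ is immediate since $t_{j_1}^{\lambda}$ has length $\le \lambda$, and in fact one should note the sharper fact that a power $t^{\lambda}$ reachable in $G^{\ab}$ within $D$ steps is reachable in $\le D$ steps. For the inductive step one uses the bilinearity of the commutator pairing together with the identity $[x, y]^a$ being congruent, modulo higher terms, to $[x^a, y] \equiv [x, y^a]$; combining a chosen factorisation $\lambda = a_1^i + \cdots + a_{n_i}^i + r$ from \autoref{elementary lemma} with the distortion trick $[t_{j_1}, [t_{j_2}, \ldots]]^{a^i} \equiv [t_{j_1}^a, [t_{j_2}^a, \ldots, [t_{j_{i-1}}^a, t_{j_i}^a]\ldots]]$ (which costs $\le i a$ letters to build each nested commutator block, of which there are $n_i = O_i(1)$, plus $r = O_i(\lambda^{1/i})$ leftover copies each of bounded cost), one reaches $[t_{j_1}, \ldots]^{\lambda}$ in $O_{i,s}(\lambda^{1/i})$ steps modulo $G^{(i+1)}$.

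The final assembly is then routine: any $g \in \frac{G^{(i)}}{G^{(i+1)}}$ can be written as a product of at most $O_{i,s}(1)$ powers of the $s^i$ basic nested commutators — here one uses that $\frac{G^{(i)}}{G^{(i+1)}}$ is a finite abelian group generated by those $s^i$ elements each of which, having $S$-diameter at most a power of $D$, has order $O(D^{i})$, so the needed exponents are bounded by $O(D^i)$ and the number of factors is $O_{i,s}(1)$ — and summing the per-factor cost $O_{i,s}((D^i)^{1/i}) = O_{i,s}(D)$ over $O_{i,s}(1)$ factors gives $\diam\left(\frac{G^{(i)}}{G^{(i+1)}}, S\right) = O_{i,s}(D)$. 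Wait — that would give the wrong exponent; the point is rather that the \emph{worst-case exponent} $\lambda$ appearing is itself $O(D^i)$ only if we are sloppy, whereas the honest bound is that any element of $\frac{G^{(i)}}{G^{(i+1)}}$ within the group is a product of commutators of words of $S$-length $\le D$, hence is a product of $O_{i,s}(1)$ powers $[\cdots]^{\lambda_\ell}$ with each $\lambda_\ell = O(D)$ arising from the multilinearity (a word of length $D$ in $G^{\ab}$ has each coordinate $O(D)$), so each factor costs $O_{i,s}(D^{1/i})$ and the total is $O_{i,s}(D^{1/i})$. The main obstacle, and the step requiring the most care, is exactly this last bookkeeping: controlling how a diameter-$D$ path in $G^{\ab}$ translates into bounded-exponent data feeding the multilinear map, so that the exponents $\lambda_\ell$ entering the distortion estimate are $O(D)$ rather than $O(D^i)$ — this is where the interaction between "distance in the abelianisation" and "exponents in a chosen commutator basis" must be made precise, and it is essentially the content flagged in the acknowledgements about rounding with $i$-th powers.
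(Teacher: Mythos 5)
Your proposal follows essentially the same route as the paper's proof: the surjective multilinear map $(G^{\ab})^{\otimes i} \twoheadrightarrow G^{(i)}/G^{(i+1)}$, the decomposition $\lambda = a_1^i + \cdots + a_{n_i}^i + r$ from Lemma~\ref{elementary lemma}, and the distortion identity $[\,\cdot\,]^{a^i} \equiv [t_{j_1}^a,[\ldots,t_{j_i}^a]\ldots]$ modulo $G^{(i+1)}$, with your corrected final paragraph (not the discarded first attempt) matching the paper's argument. The bookkeeping step you rightly flag as the crux --- that the exponents entering the distortion estimate are $O(D)$ rather than $O(D^i)$ --- is handled in the paper by collecting the integer coefficients into a single slot as an element $y_g \in G^{\ab}$ and re-expressing $y_g$ as a word of length at most $D = \diam(\Gamma(G^{\ab},S))$ before re-expanding by multilinearity, which is precisely the mechanism your parenthetical gestures at.
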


\begin{proof}
We show that for each $h \in \frac{G^{(i)}}{G^{(i+1)}}$, there exist $y_1, \ldots, y_d$ in $S$ such that 
\begin{equation}
    h \equiv y_1 \cdots y_d \mod G^{(i+1)},
\end{equation}
with $d = O_{i,s} \left(\diam(\Gamma^{\ab}, S)^{1/i}) \right)$.
By \autoref{tensors}, we can find $(\lambda_f)_{f \colon [i] \to S} \in \Z^{[i] \to S}$ such that 
\begin{equation}
    h = \sum_{f \colon [i] \to S} \lambda_f [f(1), [ \ldots, [ , f(i)] \ldots ].
\end{equation}
By multilinearity, we can collect the last entries of the nested commutators for each choice of the first $i-1$ entries and obtain for each $g:[i-1] \to S$ an element $y_g$ in $G^{\ab}$ such that
\begin{equation} \label{eq:gsum}
    h = \sum_{g \colon [i-1] \to S}  [g(1), [ \ldots, [g(i-1) , y_g] \ldots ].
\end{equation}
Now for each $g:[i-1] \to S$ rewrite 
\begin{equation}
y_g=\sum_{x \in S} \lambda(x,g) [x]_{G^{\mathrm{ab}}}
\end{equation}
with 
\begin{equation}\label{eq:l1}
\sum_{x \in S} | \lambda(x, g) | \le \diam(\Gamma(G^{\ab}, S)).
\end{equation}

Using multilinearity once again, we have
\begin{equation}[g(1), [ \ldots, [g(i-1) , y_g] \ldots ] = \sum_{x \in S}  [g(1), [ \ldots, [g(i-1) ,  \lambda(x,g) [x]_{G^{\mathrm{ab}}}] \ldots ] 
\end{equation}
Rewrite, for each $x \in S$, using Lemma \ref{elementary lemma}
\begin{align*}
&\sgn(\lambda(x,g)) |\lambda(x,g)|  [g(1), [ \ldots, [g(i-1) ,   [x]_{G^{\mathrm{ab}}}] \ldots ]  \\
&= \sgn(\lambda(x,g))(\sum_{h=1}^{n_i}a_h^i+r) \cdot [g(1), [ \ldots, [g(i-1) ,   [x]_{G^{\mathrm{ab}}}] \ldots ] \\ &=\sgn(\lambda(x,g))((\sum_{h=1}^{n_i}[a_hg(1), [ \ldots, [a_hg(i-1) ,a_h   [x]_{G^{\mathrm{ab}}}] \ldots ])+r \cdot [g(1), [ \ldots, [g(i-1) ,   [x]_{G^{\mathrm{ab}}}] \ldots ]).
\end{align*}

On the last line, each of the $n_i+1$ summands are $O_i(| \lambda(x, g) |^{1/i})$, hence the $g$-th term of the sum \eqref{eq:gsum} has length at most $O_i(\sum_{x \in S} | \lambda(x, g) |^{1/i}))$, which is $O_i(\diam(\Gamma(G^{\ab}, S))^{1/i})$ by Jensen's inequality and recalling \eqref{eq:l1}.

Summing over all $g$ we thus get an upper bound of $s^{i-1} O_i(\diam(\Gamma(G^{\ab}, S))^{1/i})$, which is $O_{s,i}(\diam(\Gamma(G^{\ab}, S)^{1/i})$ as claimed.
\end{proof}

\begin{cor} \label{cor:main}
Let $G$ be a finite nilpotent group of class $c \in \Z_{\ge 1} $. Let $S \subset G$ be a symmetric generating set of size $s \in \mathbb{Z}_{\ge 2}$. We have
\begin{equation}
    \diam(\Gamma(G^\ab, S)) \le \diam(\Gamma(G,S)) \le \diam(\Gamma(G^\ab, S)) + O_{c,s}\left(\sqrt{\diam(\Gamma(G^\ab, S))} \right)
\end{equation}
\end{cor}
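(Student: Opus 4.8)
The plan is to iterate \autoref{lem:quotients} along the lower central series of $G$ and then control each graded piece with \autoref{prop:main}. The lower bound is immediate: apply \autoref{lem:quotients} with $H=G$ and $N=G^{(2)}$, recalling that $\frac{G}{G^{(2)}}=G^{\ab}$ and that the metric induced on this quotient is exactly the Cayley metric of $\Gamma(G^{\ab},S)$, so that $\diam(\Gamma(G^{\ab},S))=\diam\!\left(\frac{G}{G^{(2)}},S\right)\le \diam(G,S)=\diam(\Gamma(G,S))$.

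For the upper bound, the key observation is that each $G^{(i+1)}$ is a normal subgroup of $G$, hence of $G^{(i)}$, so \autoref{lem:quotients} applies with $H=G^{(i)}$ and $N=G^{(i+1)}$ and yields
\begin{equation*}
\diam(G^{(i)},S)\ \le\ \diam\!\left(\frac{G^{(i)}}{G^{(i+1)}},S\right)+\diam(G^{(i+1)},S).
\end{equation*}
Iterating this from $i=1$ to $i=c$ and using that $G^{(c+1)}=\{\mathrm{id}\}$ has diameter $0$, I obtain the telescoping bound
\begin{equation*}
\diam(\Gamma(G,S))=\diam(G,S)\ \le\ \sum_{i=1}^{c}\diam\!\left(\frac{G^{(i)}}{G^{(i+1)}},S\right).
\end{equation*}
The $i=1$ term is precisely $\diam(\Gamma(G^{\ab},S))$. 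For each $i\ge 2$, \autoref{prop:main} bounds the corresponding term by $O_{i,s}\!\left(\diam(\Gamma(G^{\ab},S))^{1/i}\right)$. We may assume $G$ is nontrivial (otherwise the statement is vacuous), and then, $G$ being nilpotent, $G^{\ab}$ is nontrivial, so $\diam(\Gamma(G^{\ab},S))\ge 1$; hence $\diam(\Gamma(G^{\ab},S))^{1/i}\le \sqrt{\diam(\Gamma(G^{\ab},S))}$ for every $i\ge 2$. Summing the at most $c-1$ such terms gives $O_{c,s}\!\left(\sqrt{\diam(\Gamma(G^{\ab},S))}\right)$, which together with the $i=1$ term is exactly the claimed upper bound.

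There is essentially no hard step: everything is assembled from \autoref{lem:quotients} and \autoref{prop:main}, and the main content has already been established in \autoref{prop:main}. The only points needing a little care are (i) checking the normality hypotheses for each application of \autoref{lem:quotients} in the iteration, which hold because the terms of the lower central series are all normal (indeed characteristic) in $G$; (ii) confirming that the intrinsic diameter $\diam\!\left(\frac{G^{(i)}}{G^{(i+1)}},S\right)$ appearing in the telescoping sum is literally the quantity estimated in \autoref{prop:main}; and (iii) the harmless bookkeeping replacing the exponents $1/i$ by $1/2$, which uses only $\diam(\Gamma(G^{\ab},S))\ge 1$. None of these should present any genuine obstacle.
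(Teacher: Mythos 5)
Your proof is correct and follows exactly the paper's route: iterate \autoref{lem:quotients} along the lower central series to get $\diam(\Gamma(G,S))\le\sum_{i=1}^{c}\diam\bigl(\tfrac{G^{(i)}}{G^{(i+1)}},S\bigr)$, then apply \autoref{prop:main} to each graded piece. The only difference is that you spell out the (correct) bookkeeping step $\diam(\Gamma(G^{\ab},S))^{1/i}\le\sqrt{\diam(\Gamma(G^{\ab},S))}$ for $i\ge 2$, which the paper leaves implicit.
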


\begin{proof} 
By the left-hand side of the inequality in \autoref{lem:quotients}, the left-hand side follows immediately.

Using the right-hand side of the inequality in \autoref{lem:quotients} inductively for the terms of the lower central series, we obtain
\begin{equation}
    \diam(\Gamma(G,S)) \le \sum_{i \ge 1} \diam\left(\frac{G^{(i)}}{G^{(i+1)}}, S \right).
\end{equation}
Appealing to \autoref{prop:main} now yields the desired conclusion.
\end{proof}

\section{The case of unitriangular groups and more general sequences of nilpotent groups}
In this section, we apply \autoref{cor:main} to determine the limiting distribution of the appropriately rescaled diameters of random Cayley graphs of finite nilpotent groups of bounded rank and class.

The resulting theorem below is a generalisation of \cite[Theorem 1.2]{ShapiraZuck2018}, which corresponds to the case $c=1$ of our result. As the reader shall soon see, however, our proof consists of a reduction to that case by means of Corollary \ref{cor:main}.

\begin{thm} \label{thm:main1}
Let $k>r \geq 1$ and $c \geq 1$ be integers. Let $\{G_n\}_{n \in \mathbb{Z}_{\geq 1}}$ be a sequence of finite nilpotent groups of rank at most $r$, nilpotency class at most $c$ and with $|G_n|$ approaching infinity as $n$ goes to infinity.

Choosing a subset $S$ uniformly at random among all symmetric generating subsets $S$ of $G_n$ of size $k$, then as $n \to \infty$ we have that the random variables $\dfrac{\diam(\Gamma(G_n, S))}{|G_n^{\ab}|^{\frac{1}{k}}}$ converge in distribution.
Moreover,
\begin{equation}
   \frac{\diam(\Gamma(G_n, S))}{|G_n^{\ab}|^{\frac{1}{k}}} \xrightarrow[n \to \infty]{\mathrm{d}} 
   \diam(\R^k/L)
\end{equation}
where the random variable on the right-hand side is defined by choosing $L$ at random in the space $\SL_k(\R)/\SL_k(\Z)$ of unimodular lattices in $\R^k$ with respect to the Haar probability measure
and the diameter on the right-hand side is with respect to the $\ell^1$ metric.
\end{thm}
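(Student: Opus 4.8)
The plan is to deduce the statement from the case $c=1$, which is exactly \cite[Theorem~1.2]{ShapiraZuck2018}, by combining \autoref{cor:main} with the observation that abelianisation transports the relevant probability measure on $G_n$ to the one studied by Shapira and Zuck on $G_n^{\ab}$. Throughout, $\pi\colon G_n\to G_n^{\ab}$ denotes the quotient map, and I use the model of \cite{ShapiraZuck2018,HermonThomas2019}: one picks $g_1,\dots,g_k\in G_n$ uniformly and independently, conditions on $\langle g_1,\dots,g_k\rangle=G_n$, and lets $S$ be the symmetric connection set they generate.

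I would start with two structural remarks. First, since $G_n$ has rank at most $r$ and class at most $c$, the surjections $(G_n^{\ab})^{\otimes i}\twoheadrightarrow G_n^{(i)}/G_n^{(i+1)}$ of \autoref{tensors} give $|G_n|\le|G_n^{\ab}|^{\sum_{i=1}^{c}r^{i}}$, so $|G_n|\to\infty$ forces $|G_n^{\ab}|\to\infty$; moreover $\operatorname{rank}(G_n^{\ab})\le r<k$. Second, because $[G_n,G_n]$ is contained in the Frattini subgroup of the finite nilpotent group $G_n$, a $k$-tuple of elements generates $G_n$ if and only if its image in $G_n^{\ab}$ generates $G_n^{\ab}$. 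It follows that the coordinatewise map $\pi^{\times k}\colon G_n^{k}\to(G_n^{\ab})^{k}$ --- a surjective homomorphism, each of whose fibres has cardinality $|[G_n,G_n]|^{k}$ --- restricts to a surjection from the set of generating $k$-tuples of $G_n$ onto the set of generating $k$-tuples of $G_n^{\ab}$, still with all fibres of cardinality $|[G_n,G_n]|^{k}$. Hence $(\pi(g_1),\dots,\pi(g_k))$ is uniformly distributed over generating $k$-tuples of $G_n^{\ab}$, which is precisely the Shapira--Zuck model on $G_n^{\ab}$; writing $\overline S=\pi(S)$ for the induced symmetric connection set, \cite[Theorem~1.2]{ShapiraZuck2018} then yields
\[
\frac{\diam(\Gamma(G_n^{\ab},\overline S))}{|G_n^{\ab}|^{1/k}}\xrightarrow[n\to\infty]{\mathrm d}\diam(\R^{k}/L),
\]
with $L$ Haar-random in $\SL_k(\R)/\SL_k(\Z)$ and the diameter taken in the $\ell^{1}$ metric.

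Finally I would transfer this back to $G_n$. Applying \autoref{cor:main} to $G_n$ with its symmetric generating set $S$ (of class at most $c$ and size at most $2k$),
\[
0\le\diam(\Gamma(G_n,S))-\diam(\Gamma(G_n^{\ab},\overline S))\le O_{c,k}\!\left(\sqrt{\diam(\Gamma(G_n^{\ab},\overline S))}\,\right).
\]
The displayed limit shows that $\diam(\Gamma(G_n^{\ab},\overline S))$ is $O_{\mathbb{P}}(|G_n^{\ab}|^{1/k})$, so the right-hand side above is $O_{\mathbb{P}}(|G_n^{\ab}|^{1/(2k)})$, which is $o_{\mathbb{P}}(|G_n^{\ab}|^{1/k})$ because $|G_n^{\ab}|\to\infty$. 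Thus $\diam(\Gamma(G_n,S))/|G_n^{\ab}|^{1/k}$ differs from $\diam(\Gamma(G_n^{\ab},\overline S))/|G_n^{\ab}|^{1/k}$ by a term going to $0$ in probability, and Slutsky's theorem gives the claimed convergence to $\diam(\R^{k}/L)$. The substantive ingredient here is \autoref{cor:main}, already established; everything else is soft, the two points deserving attention being the measure-preservation of abelianisation (second paragraph) and the fact that $|G_n^{\ab}|\to\infty$ makes the square-root error term in \autoref{cor:main} negligible after rescaling.
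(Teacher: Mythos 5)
Your proposal is correct and follows essentially the same route as the paper: establish $|G_n^{\ab}|\to\infty$, invoke \cite[Theorem~1.2]{ShapiraZuck2018} for the abelianisation, and absorb the $O_{c,k}(\sqrt{\cdot})$ error from \autoref{cor:main} via Slutsky's lemma. The only difference is that you spell out the measure-preservation of the abelianisation map on generating tuples (via the Frattini argument), a point the paper leaves implicit.
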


\begin{proof}
For $n \ge 1, r \ge 1$ and $k > r$, denote the random variable $\dfrac {\diam(\Gamma(G_n, S))}{|G_n^{\ab}|^{\frac{1}{k}}}$ by $X_n$ and the random variable $\dfrac {\diam(\Gamma(G_n^\ab, S))}{|G_n^{\ab}|^{\frac{1}{k}}}$ 
by $X_n^\ab$.

Applying \autoref{cor:main} to the finite nilpotent group $G_n$ (of class at most $c$) thus yields the inequalities 
\begin{equation} \label{squeezing}
    X_n^\ab \le X_n \le X_n^\ab +
    O_{k,c}\left(\frac{\sqrt{X_n^\ab}}{|G_n^{\ab}|^{\frac{1}{2k}}} \right).
\end{equation}
We next remark that we must have that $|G_n^{\ab}|$ approaches infinity as $n$ goes to infinity. Indeed from Section \ref{tensors} it follows immediately that there are finitely many nilpotent groups of bounded nilpotency class and bounded size of the abelianisation and this would contradict the fact that $|G_n|$ tends to infinity as $n$ goes to infinity. This is also explained in \cite[Lemma 4.13]{BreuillardTointon2016}. Moreover, observe that 
the groups $G_n^{\ab}$ trivially have rank bounded by $r$. We are therefore in a position to use \cite[Theorem 1.2]{ShapiraZuck2018} to deduce that $X_n^\ab$ converges in distribution to the random variable defined on the space of $k$-dimensional unimodular lattices as in the statement of our theorem, say $X$.

Note also that $\left(\frac{\sqrt{X_n^\ab}}{|G_n^{\ab}|^{\frac{1}{2k}}} \right)_n$ converges in probability to $0$.

The right-hand side of \eqref{squeezing} is therefore of the form $X_n^\ab + \varepsilon_n$ with $X_n^\ab \xrightarrow{\mathrm{d}} X$ and $\varepsilon_n \xrightarrow{\mathrm{P}} 0$.
It follows from Slutsky's lemma that $X_n^\ab + \varepsilon_n \xrightarrow{\mathrm{d}} X$.

This finishes the proof.
\end{proof}

We now use \autoref{thm:main1} with the the sequence of finite nilpotent groups $H_{q,d}$ of upper triangular $d \times d$ matrices over $\Z/q\Z$ with $1$ on the diagonal: they are of nilpotency class $d-1$ and $H_{q,d}^\ab \simeq (\Z/q\Z)^{d-1}$.
We thus obtain the following theorem and, in doing so, a proof of \cite[Conjecture 7]{HermonThomas2019} along with an explicit description of the limiting distribution.

\begin{thm} \label{thm:main}
Let $q \ge 2, d \ge 2$ and $k \ge d$. 
Choosing a subset $S$ uniformly at random among all symmetric generating subsets $S$ of $H_{q,d}$ of size $k$, then as $q \to \infty$ we have that the random variables $\dfrac{\diam(\Gamma(H_{q,d}, S))}{q^{(d-1)/k}}$ converge in distribution.
Moreover,
\begin{equation}
   \frac{\diam(\Gamma(H_{q,d}, S))}{q^{(d-1)/k}} \xrightarrow[q \to \infty]{\mathrm{d}} 
   \diam(\R^k/L)
\end{equation}
where the random variable on the right-hand side is defined by choosing $L$ at random in the space $\SL_k(\R)/\SL_k(\Z)$ of unimodular lattices in $\R^k$ with respect to the Haar probability measure
and the diameter on the right-hand side is with respect to the $\ell^1$ metric.
\end{thm}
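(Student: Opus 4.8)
The plan is to derive \autoref{thm:main} as an essentially immediate corollary of \autoref{thm:main1}, so the only real work is to verify that the sequence $\{H_{q,d}\}_{q \ge 2}$ (for fixed $d \ge 2$) satisfies the hypotheses of that theorem with the abelianisation having the claimed size. First I would fix $d \ge 2$ and $k \ge d$, and regard $q$ as the index running to infinity; I must check that $H_{q,d}$ is a finite nilpotent group of nilpotency class at most $c := d-1$ and rank at most some $r < k$. Nilpotency of class exactly $d-1$ is the standard fact that the lower central series of the unitriangular group steps down one superdiagonal at a time: $H_{q,d}^{(i)}$ consists of matrices that agree with the identity on the first $i-1$ superdiagonals, so $H_{q,d}^{(d)} = \{\mathrm{id}\}$. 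For the rank, $H_{q,d}$ is generated by the $d-1$ elementary matrices $\mathrm{id} + E_{j,j+1}$ together with (if $q$ is not squarefree) at most a bounded number of extra generators; in any case the rank is bounded by a function of $d$ alone, say by $\binom{d}{2}$, and since $k \ge d$ we are free to take any fixed $r$ with $d-1 \le r < k$ — note the hypothesis $k > r$ in \autoref{thm:main1} forces $k \ge d$, which is exactly the hypothesis we are given.

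The second point to nail down is the abelianisation. The commutator subgroup $H_{q,d}^{(2)} = [H_{q,d},H_{q,d}]$ is the subgroup of matrices that are the identity on the first superdiagonal, and one checks that $H_{q,d}^{\ab} \cong (\Z/q\Z)^{d-1}$, the isomorphism being induced by reading off the $d-1$ entries on the first superdiagonal. This is again a routine computation with elementary matrices: the entries on the first superdiagonal are additive under multiplication modulo the contribution of lower superdiagonals, and conversely any commutator kills the first superdiagonal. Hence $|H_{q,d}^{\ab}| = q^{d-1}$, so the normalising factor $|G_n^{\ab}|^{1/k}$ from \autoref{thm:main1} becomes exactly $q^{(d-1)/k}$, matching the statement.

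With these verifications in hand, I would simply invoke \autoref{thm:main1}: taking $G_n = H_{n,d}$ (or, to be pedantic about the indexing by $n \in \Z_{\ge 1}$, any enumeration of $\{H_{q,d} : q \ge 2\}$, noting $|H_{q,d}| = q^{\binom{d}{2}} \to \infty$), the random variables $\diam(\Gamma(H_{q,d},S))/|H_{q,d}^{\ab}|^{1/k} = \diam(\Gamma(H_{q,d},S))/q^{(d-1)/k}$ converge in distribution to $\diam(\R^k/L)$ with $L$ Haar-random in $\SL_k(\R)/\SL_k(\Z)$ and the diameter taken in the $\ell^1$ metric, which is precisely the assertion. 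I do not expect any genuine obstacle here; the only mild subtlety is bookkeeping — making sure the rank bound is genuinely independent of $q$ (it is, since it depends only on $d$ and on the number of prime power factors needed to generate $\Z/q\Z$, which is at most one) and that $|H_{q,d}| \to \infty$ as $q \to \infty$ so that the hypothesis $\lim |G_n| = \infty$ is met. Everything else is a direct translation of \autoref{thm:main1} to this family, together with the two standard structural facts about unitriangular groups recalled above, and a pointer to the matching limiting distribution obtained by Shapira and Zuck for $(\Z/q\Z)^{d-1}$.
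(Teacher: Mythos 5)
Your proof is correct and follows essentially the same route as the paper, which deduces \autoref{thm:main} from \autoref{thm:main1} by observing that $H_{q,d}$ has nilpotency class $d-1$ and abelianisation $(\Z/q\Z)^{d-1}$, so the normalisation $|G_n^{\ab}|^{1/k}$ becomes $q^{(d-1)/k}$. The one point to tighten is the rank: you should commit to $r = d-1$ (the rank of a finite nilpotent group equals that of its abelianisation, since generators of the abelianisation lift to generators), because a looser bound such as $\binom{d}{2}$ would not satisfy the hypothesis $k > r$ under the given assumption $k \ge d$.
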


\section{Concluding remarks}

Let $i$ be in $\mathbb{Z}_{\geq 2}$. One can then ask the following related questions. 

\textbf{Questions:} What is the correct order of magnitude of $\diam \left(H_{q,d}^{(i)},S \right)$? Is the power $q^{\frac{d-1}{ik}}$ suggested by the upper bound of \autoref{prop:main} sharp (to hold in probability)? 

We only remark that, using the same type of argument based on growth that one uses to show the logarithmic behaviour as a general lower bound, one can establish as a pointwise lower bound a much smaller power of $q$, depending on $i$. Such a trivial estimate can be slightly improved using the equidistribution theorem in \cite{ShapiraZuck2018} and basic facts about the shortest vector statistics on spaces of unimodular lattices.  However, the resulting gain on the power of $q$ is still not enough to reach $q^{\frac{d-1}{ik}-\epsilon}$ as a pointwise lower bound. 

One can also ask about the difference $\diam \left(\Gamma(H_{q,d},S) \right)-\diam \left(\Gamma(H_{q,d}^{\ab},S) \right)$. Corollary \ref{cor:main} gives an upper bound for this quantity. We ask the following: 

\textbf{Question:} Can one give a sharp lower bound for the quantity $\diam \left(\Gamma(H_{q,d},S) \right)-\diam \left(\Gamma(H_{q,d}^{\ab},S) \right)$ (to hold in probability)? 

Finally, what about those questions for more general sequences $\{G_n\}_{n \in \mathbb{Z}_{\geq 1}}$ of finite nilpotent groups as in Theorem \ref{thm:main1}?
\bibliographystyle{plain}
\bibliography{references}

\end{document}